\documentclass[12pt]{amsart}

\usepackage{amsmath,amssymb,amscd,amsfonts,amsthm,verbatim}
\usepackage[all,cmtip]{xy}
\usepackage{paralist}
\usepackage[mathscr]{eucal}
\usepackage{color}
\usepackage{pdfsync}
\usepackage[]{fontenc}
\usepackage{enumerate}

\usepackage[pdftex]{hyperref}
\hypersetup{citecolor=blue,linktocpage}

\usepackage[colorinlistoftodos]{todonotes}
 %remark
 %remark

\newcommand\rp[1]{\todo[color=yellow]{#1}}

%\usepackage[inner]{showlabels}
 % Only for showlabels. Remove in the final version!
%\newcommand\showlabelsbreak{} % Use this in the final version!

\synctex=-1

\newtheorem{thm}{Theorem}[section]
\newtheorem{lem}[thm]{Lemma}

\newtheorem{assu-nota}[thm]{Assumption--Notation}

\theoremstyle{definition}
\newtheorem{defn}[thm]{Definition}
\newtheorem{rem}[thm]{Remark}

\newcommand{\Q}{\mathbb Q}

\newcommand{\pp}{\mathbb P}

\newcommand{\bP}{\mathbb P}

\newcommand{\OO}{\mathcal O}

\DeclareMathOperator{\Pic}{Pic}

\DeclareMathOperator{\Alb}{Alb}
\DeclareMathOperator{\alb}{alb}

\DeclareMathOperator{\Spec}{Spec}

\DeclareMathOperator{\kod}{kod}

\newcommand{\ol}{\overline}

\numberwithin{equation}{section}

\setcounter{tocdepth}{1}

\title{Surfaces on the Severi line}
\author{Miguel Angel Barja,  Rita Pardini and Lidia Stoppino}

\begin{document}
\begin{abstract} Let $S$ be a minimal complex surface of general type and
of maximal Albanese dimension; by the Severi inequality one has $K^2_S\ge 4\chi(\OO_S)$. We prove that the equality $K^2_S=4\chi(\OO_S)$ holds if and only if $q(S):= h^1(\OO_S)=2$ and the canonical model of $S$ is a double cover of the Albanese surface branched on an ample divisor with at most negligible singularities.\par
\noindent{\em 2000 Mathematics Subject Classification:} 14J29
\end{abstract}
\maketitle
\tableofcontents

\section{Introduction}

Let $S$ be a  complex surface; we write $\chi(S)$ for $\chi(\OO_S)=\chi(\omega_S)$ and $q(S):=h^1(\OO_S)$ for the irregularity.

 Recall that $S$ has  maximal Albanese dimension if its Albanese map is generically finite; if  $S$ is minimal and of maximal Albanese dimension, then the numerical invariants of $S$ satisfy the so-called Severi inequality:
\begin{equation}\label{eq:severi}
K_S^2\ge 4\chi(S).
\end{equation}
In this paper we characterize the minimal surfaces of general type  and maximal Albanese dimension on the Severi line, namely those  for which the inequality \eqref{eq:severi} is actually an equality:
\begin{thm}\label{main}
Let $S$ be a minimal surface of general type of maximal Albanese dimension.
The equality $K_S^2=4\chi(S)$ holds if and only if:
\begin{itemize}
\item[(a)]
  $q(S)=2$;  and
 \item[(b)]  the canonical model of $S$ is a double cover of the Albanese surface $\Alb(S)$ whose  branch divisor is ample and has at most negligible singularities.
 \end{itemize}
\end{thm}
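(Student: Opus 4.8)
I would prove the two implications separately; the implication (b)$+$(a)$\Rightarrow$ equality is a computation, while the converse is the substance.

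\textbf{Sufficiency.} Assume (a) and (b). Let $X$ be the canonical model and $\pi\colon X\to A:=\Alb(S)$ the double cover, with branch divisor $B\in|2L|$, so that $\pi_*\OO_X=\OO_A\oplus L\inv$ and $K_X=\pi^*(K_A+L)=\pi^*L$ since $A$ is abelian. Because $B$ has at most negligible singularities, the canonical resolution of $\pi$ has the same $K^2$ and $\chi$ as in the smooth case, so
\[
K_S^2=K_X^2=2L^2,\qquad \chi(\OO_S)=\chi(\OO_X)=2\chi(\OO_A)+\tfrac12 L(L+K_A)=\tfrac12 L^2,
\]
whence $K_S^2=4\chi(\OO_S)$. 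Ampleness of $B=2L$ forces $K_X=\pi^*L$ to be ample, confirming that $X$ is the canonical model; and $q(X)=h^1(\OO_A)+h^1(A,L\inv)=2+0=2$ because $L$ is ample on the abelian surface $A$, so $\pi$ is (up to translation) the Albanese map and $S$ has maximal Albanese dimension — consistent with the hypotheses.

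\textbf{Necessity: the engine.} Now assume $K_S^2=4\chi(\OO_S)$; write $a\colon S\to A$ for the Albanese map, $q:=q(S)\ge 2$, $d:=\deg a$. My plan is to run Pardini's proof of the Severi inequality in the equality case. For $n\ge 1$ put $\mu_n\colon A\to A$ equal to multiplication by $n$ and $S_n:=S\times_{A,\mu_n}A$, étale over $S$ of degree $n^{2q}$ and carrying a degree-$d$ morphism $a_n\colon S_n\to A$; then $K_{S_n}^2=n^{2q}K_S^2$, $\chi(\OO_{S_n})=n^{2q}\chi(\OO_S)$ (so each $S_n$ again lies on the Severi line) and the branch divisor of $a_n$ grows numerically like $n^2$. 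Blowing up the (boundedly many) base points of $a_n^*|H|$, for a fixed general pencil $|H|$ of an ample divisor on $A$, yields a fibration $f\colon W_n\to\bP^1$ whose general fibre, a degree-$d$ cover of a curve in $A$ ramified over $\sim n^2$ points, has genus $g_n\to\infty$. Feeding the numerical data of $W_n$ into Xiao's slope inequality $K_f^2\ge\tfrac{4(g_n-1)}{g_n}\deg f_*\omega_f$ and letting $n\to\infty$ recovers $K_S^2\ge 4\chi(\OO_S)$, and I would track the error terms finely enough to see that our equality forces the slope of $f$ to tend to $4$ with no vertical contribution. Since a genus-$g$ fibration that is not relatively hyperelliptic has slope bounded below by a constant exceeding $4$, this forces $f$ to be relatively hyperelliptic for $n\gg 0$: the general fibre $a_n^*H'$ of $a_n$ is hyperelliptic. (Equivalently, by generic vanishing the continuous rank $h^0_a(K_S)=\min_{\alpha\in\Pic^0(S)}h^0(\omega_S\tensor\alpha)$ equals $\chi(\OO_S)$, so the hypothesis is exactly equality in the Clifford--Severi inequality $L^2\ge 4h^0_a(L)$, and one may instead invoke the structure of its ``eventual map'', which must then be $2$-to-$1$ onto its image.)

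\textbf{The main obstacle.} The hard part is to upgrade this fibrewise, asymptotic information to genuine global structure on $S$. I would show that the relative hyperelliptic involutions — rigidified by the genus-$0$ base and the vanishing of the vertical part — are compatible with the maps $a_n$ and with the étale tower $\{S_n\}$; this forces $\deg a=2$ and the existence of an involution $\iota$ on $S$ (acting biregularly, by uniqueness of minimal models) commuting with $a$, whose quotient $\Sigma$ maps birationally onto the image $a(S)\subseteq A$. Sharpening the error estimates so that they detect \emph{genuine} rather than merely limiting equality, and globalizing and descending the relative hyperelliptic involution through the tower, is where I expect essentially all the difficulty to lie.

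\textbf{Conclusion.} A smooth model of $\Sigma$ has a generically finite map to $A$, hence maximal Albanese dimension; an elementary computation with the double-cover invariants (reading $K_S^2=4\chi(\OO_S)$ through the relations $\chi(\OO_S)=2\chi(\OO_\Sigma)+\tfrac12 L(L+K_\Sigma)$ and $K_S^2=2(K_\Sigma+L)^2$, where $S\to\Sigma$ is branched on $2L$) then gives $\kappa(\Sigma)=0$, so the minimal model of $\Sigma$ is an abelian surface, the only minimal surface of Kodaira dimension $0$ with maximal Albanese dimension. Since $S\to\Sigma$ has degree $2$ and factors the Albanese map of $S$, one gets $\Alb(\Sigma)=A$ and in particular $q(S)=2$, which is (a). Finally I would descend $\iota$ to the canonical model $X$: by minimality of abelian surfaces together with ampleness of $K_X$ the induced $a_X\colon X\to A$ is finite, hence a flat double cover, $a_{X*}\OO_X=\OO_A\oplus L\inv$, $K_X=a_X^*L$, with branch divisor $B\in|2L|$; ampleness of $K_X$ yields ampleness of $L$, hence of $B$. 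Comparing the true values of $K_X^2$ and $\chi(\OO_X)$ with those predicted by the double-cover formulas on the canonical resolution of $a_X$ shows that the defect terms attached to the singular points of $B$ all vanish — precisely the condition that $B$ have at most negligible singularities. This gives (b) and completes the plan.
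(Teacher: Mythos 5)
Your sufficiency computation is fine (it is the easy direction, handled in a few lines at the end of the paper), but the necessity half --- the real content of the theorem --- rests on a step that does not work, and you yourself locate it without resolving it. Running Pardini's covering trick and ``tracking the error terms'' in Xiao's slope inequality cannot detect the equality case: although each $S_n$ again satisfies $K_{S_n}^2=4\chi(\OO_{S_n})$, the slope inequality applied to the pencil $f_n\colon W_n\to\bP^1$ is strict for every finite $n$ (there are positive contributions from the blown-up base points and from the gap between $4(g_n-1)/g_n$ and $4$), and these defects only vanish in the limit; so ``slope $\to 4$'' imposes no condition at any finite level. Worse, the dichotomy you rely on --- that a non relatively hyperelliptic fibration has slope bounded below by a constant exceeding $4$ --- is false, and in fact the surfaces you are trying to characterize are counterexamples to your intermediate conclusion: when $S\to\Alb(S)$ is a double cover, the general member of $a_n^*|H|$ is a double cover of a curve $H'\subset A$ of genus $\ge 2$ with many branch points, hence of large genus and \emph{not} hyperelliptic, while the slopes still tend to $4$. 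Your parenthetical fallback (``the eventual map must be $2$-to-$1$'') simply invokes a structure theorem for equality in the Clifford--Severi inequality that is not available here --- it is essentially the statement to be proved. Finally, even granting a degree-$2$ involution, your derivation of $q(S)=2$ needs to know that the quotient has Kodaira dimension $0$, which your double-cover computation alone does not give without controlling minimality and the singularities of the branch locus.

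For comparison, the paper closes exactly this gap by replacing the limit argument with finite-level exact equalities: Barja's continuous-rank Clifford--Severi inequalities, applied to the nef decomposition $K_{S_d}\equiv (K_{S_d}-C_d)+C_d$ (with $K_{S_d}-C_d$ nef for $d\gg0$ by Lemma \ref{K-Cnef}), force $C_d^2=4h^0_a(S_d,C_d)$ and $h^1_a=0$ for every large $d$; Castelnuovo's genus bound (Lemma \ref{birational}) then excludes that $|C_d+\alpha|$ is birational, the inequality $D_d^2\ge 2h^0_a(R_d,D_d)$ on the image pins the degree of $f_d$ to exactly $2$, and the Debarre--Fahlaoui bound on abelian varieties inside Brill--Noether loci forces $\kod(R_d)=0$, so $R_d$ is birationally an abelian surface; this yields simultaneously $q(S)=2$ and $\deg\alb_S=2$, after which the Stein factorization and Persson's canonical-resolution formulas give ampleness of the branch divisor and negligibility of its singularities, as in your last paragraph. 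None of these finite-$d$ statements can be extracted from the asymptotic slope analysis you propose, so as it stands your plan has a genuine gap precisely at its central step.
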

We point out that the assumption that $S$ be of general type in  Theorem \ref{main} cannot be weakened: if $E$ is a curve of genus 1 and $C$ is a curve of genus $g>1$, then $Y=E\times C$ has maximal Albanese dimension and lies on the Severi line but $q(Y)=g+1$ can be arbitrarily large. More generally, it is easy to see that a properly elliptic surface $S$ of maximal Albanese dimension is isotrivial  and has $\chi=0$; using the results of \cite{serrano} and  \cite{serranoC}, it follows that $S$ is a free quotient of a product $E\times C$ as above.

In order to put this result in perspective, we give a  brief account of  the history of the Severi inequality and its generalizations. The inequality, claimed erroneously by Severi (\cite{sev}) in 1932   and then posed as a conjecture in  the 1970's by Catanese and Reid (\cite{cat}, \cite{miles}), was first  proven by Manetti  (\cite{man}) at the end of the 1990's under the additional assumption that $K_S$ be ample, and then   a full proof was given by the second named author in 2004 (\cite{Pa}).
\begin{comment}
The two proofs are of a  completely different  nature: Manetti's proof is based on a very detailed analysis  of  the curves on the surface and it gives as a byproduct  the characterization of the surfaces  on the Severi line with $K_S$ ample as double covers of an abelian surface branched on a smooth ample divisor,  while the proof given in \cite{Pa}  makes use of a limit argument, the so-called covering trick  (cf. \S \ref{covering-trick}), and for this reason it gives no information on  the  surfaces on the Severi line.

The two proofs are of a  completely different  nature: Manetti's proof is based on a very detailed analysis  of  the curves on the surface  while the proof given in \cite{Pa}  makes use of a limit argument, the so-called covering trick  (cf. \S \ref{covering-trick}). \rp{cambiato (Lidia dice che sono troppo modesta)}
\end{comment}

The proof  given in  \cite{man} yields  as a byproduct
the  characterization of   the surfaces  on the Severi line with $K_S$ ample: these  are   double covers of an abelian surface branched on a smooth ample divisor.
The  statement of Theorem \ref{main} is the natural extension of this characterization to the general case,  and as such has been  widely believed to be true (cf. \cite[\S 5.2]{ML-P}), but  for a long time  it could not be proved.
For instance the ``first'' case, $K^2=4$ and  $\chi=1$,  corresponding  to a double cover of a principally polarized  abelian surface branched on a divisor of $|2\Theta|$,  has been the object of two papers: the characterization has  been proven in \cite{cm}  under the assumption that the bicanonical map be non-birational and then  in \cite{cmp} without this assumption. Both proofs are quite technical and rely on very ad-hoc arguments, that do not extend to higher values of $\chi$. For $\chi>1$  up to now it was not even known  whether the surfaces on the Severi line have bounded irregularity $q(S)$.

Since the full proof of the Severi inequality given in \cite{Pa} consists in applying the so-called covering trick (cf. \S \ref{covering-trick}) and then taking  a limit, it is not possible to extract from it  information on the case where equality holds.
 The breakthrough  that allows us to overcome this difficulty  is the work \cite{barja-severi} by the first named author, where  the Severi inequality  is reinterpreted as an inequality involving  the self-intersection of the nef line bundle $K_S$ and  its    continuous rank (cf. \S \ref{severi-inequalities}), and extended to the much more general situation  of an arbitrary  nef line bundle on a $n$-dimensional variety  of maximal Albanese dimension (Clifford-Severi inequality). By combining  this more general form of the inequality with the covering trick,  we  are able to avoid the limit process and give a fairly  quick proof of Theorem \ref{main}. Another essential ingredient of the proof is the classical Castelnuovo's bound on the geometric genus of a curve of degree $d$ in $\pp^r$.

\smallskip

\noindent{\bf Notation and conventions:} We work over the complex numbers.
All varieties are assumed to be projective.
Given a surface   $S$ with $q(S)>0$,  we denote by $\Alb(S)$ its Albanese variety and by  $\alb_S\colon S\to \Alb(S)$ the Albanese map.\\
We will use interchangeably the notion of line bundles and (Cartier) divisors, but adopt only the additive notation.
%If $D$ is a divisor on a variety $X$, we will frequently use the notation $H^i(X, D)$ to indicate $H^i(X, \cO_X(D))$.
We use $\equiv$ to denote numerical  equivalence of ($\Q$)-divisors.
We say that a divisor $D$ is {\it pseudoeffective} if $D\cdot A\geq 0$
for any nef divisor  $A$.

\smallskip

\noindent {\bf Acknowledgements:} We would like to thank S. Rollenske for enlightening conversations on this result.
Once this paper was finished, the authors were informed by Lu and Zuo of the existence of \cite{LZ}, where Theorem \ref{main} is also proved, with different techniques. 

%%%%%%
\section{Set-up and preliminary results}

In this section we describe the construction and preliminary results needed for the proof of the main theorem.
Most of the  theory (e.g.,  the covering trick and the continuous rank) can be developed  for varieties of arbitrary dimension, but we will stick to the surface case because this  is what we need. For more general treatments the reader can consult  the  given references.

\subsection{Covering trick}\label{covering-trick}
In this subsection we recall  the construction introduced  in \cite{Pa} in order to prove the Severi inequality for surfaces of maximal Albanese dimension.
We shall repeatedly use it throughout the paper.
\medskip

We find it convenient to introduce some terminology for maps to abelian varieties:
\begin{defn}
 Let $a\colon X\longrightarrow A$ be a morphism from a projective variety $X$ to an abelian variety $A$.

 We say that the map $a$ is {\em generating} if $a(X)$ generates $A$, and that it is  {\em strongly generating} if the map $a^*\colon \Pic^0(A)\to \Pic^0(X)$ is injective.

We say that $X$ is of {\em maximal $a$-dimension} if the image of $X$ via $a$ in $A$ has dimension equal to $\dim X$; if $a$ is the Albanese map then we say that
$X$   has {\em maximal Albanese dimension}.
\end{defn}
\begin{rem} Notice that, as suggested by the terminology, a strongly generating map  $a\colon X\to A$ is generating. Indeed,  if  $Z$ is the abelian subvariety of $A$ generated by $a(X)$, then the kernel  of
$$a^*\colon \Pic^0(A)\to \Pic^0(X)$$
contains the kernel of the natural surjection  $\Pic^0(A)\to \Pic^0(Z)$;  therefore  $\ker a^*=\{0\}$ implies that  $\Pic^0(A)\to \Pic^0(Z)$ is an isomorphism, that is, that  $A=Z$.

Notice also  that, if
%$a^*\colon \Pic^0(A)\to \Pic^0(X)$
$a\colon X\longrightarrow A$  is a generating map, then  there is a factorization $a=\phi\circ a'$ where $A'$ is the abelian variety dual to $\Pic^0(A)/\ker a^*$, the  map $ a'\colon X\to A'$  is strongly generating and $f\colon A'\to A$ is  the  dual isogeny of $\Pic^0(A)\to \Pic^0(A)/\ker a^*$.

Observe that the Albanese map of a smooth variety $X$  is strongly generating and  the  map $a\colon X\to A$ coincides with  the Albanese map if and only if it is strongly generating and $\dim A=q(X)$.
\end{rem}
Fix a   smooth projective  surface $S$  and a strongly generating morphism  $a\colon S\rightarrow A$  to an abelian variety $A$.

The covering trick goes as follows.
Fix an integer $d>0$, and consider the morphism $\mu_d\colon A \longrightarrow A$ given by multiplication by $d$.
Consider the surface $S_d$ defined by the cartesian diagram:
$$
\xymatrix{
S_d  \ar[r]^{p} \ar[d]_{a_d}& S \ar^{a}[d] \\
A\ar^{\mu_d}[r] &A\\}
$$

\begin{lem} \label{lem:Ctrick}
The surface $S_d$ is smooth and connected. In addition:
\begin{enumerate}
\item the morphism $a_d$ is strongly generating;
\item  the $a_d$-dimension of $S_d$ is equal to the $a$-dimension of $S$;
\item assume that $S$ has maximal $a$-dimension; then  $S_d$ is minimal if and only if $S$ is.
\end{enumerate}
\end{lem}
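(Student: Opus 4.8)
The plan is to exploit that $\mu_d\colon A\to A$ is a finite étale morphism of degree $d^4$ (in the surface case, since $\dim A\geq q(S)\geq 2$... actually $A$ has dimension $\geq 2$ and $\mu_d$ is étale of degree $d^{2\dim A}$). First I would record that, since $\mu_d$ is étale and $S_d=S\times_A A$ is the fiber product along the morphism $a$, the projection $p\colon S_d\to S$ is also finite étale of the same degree; hence $S_d$ is smooth of dimension $2$, and it is a surface. Connectedness is the one point requiring the strong generation hypothesis: $S_d$ is connected if and only if the pullback of the étale cover $\mu_d$ to $S$ has no nontrivial sub-cover splitting off, equivalently if and only if the subgroup $a^*(A[d]^{\vee})\subset \Pic^0(S)$ — the group classifying the connected étale covers of $S$ dominated by $S_d$ — has the right size; concretely, $\mu_d$ corresponds to the subgroup $A[d]\subset A$ (thinking of $\Hom(\pi_1(A),\,\cdot\,)$), and pulling back along a strongly generating $a$ keeps $\pi_1(S)\to \pi_1(A)=\pi_1(A)$ surjective on the relevant torsion quotients because $a^*\colon \Pic^0(A)\to\Pic^0(S)$ injective forces $H_1(S,\Z)\to H_1(A,\Z)$ to be surjective with torsion-free cokernel. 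I would phrase this cleanly: $S_d\to S$ is the étale cover associated to the homomorphism $\pi_1(S)\twoheadrightarrow H_1(S,\Z)\to H_1(A,\Z)\to H_1(A,\Z)/d\,H_1(A,\Z)$, and strong generation makes the composite surjective, so $S_d$ is connected.

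Next, for (1): $a_d\colon S_d\to A$ is strongly generating. Since $p$ is étale (in particular a fibration with connected fibers after the connectedness in the previous step — actually $p$ is finite, so I just use that $p_*\OO_{S_d}\supset \OO_S$ as a direct summand), the pullback $p^*\colon \Pic^0(S)\to \Pic^0(S_d)$ is injective. From the cartesian square $a_d=p^*\circ(\text{stuff})$… more precisely $a\circ p=\mu_d\circ a_d$, so $a_d^*\circ\mu_d^*=p^*\circ a^*$. Now $\mu_d^*\colon \Pic^0(A)\to\Pic^0(A)$ is an isomorphism (multiplication by $d$ on an abelian variety induces an isomorphism on $\Pic^0$, since $\mu_d^*L\equiv d^2 L$ and one inverts — in fact $\mu_d^*$ is multiplication by $d$ on $\Pic^0(A)=\hat A$, hence an isogeny, hence injective), so $a_d^*$ injective follows from $p^*\circ a^*$ injective, which holds because both $p^*$ and $a^*$ are injective. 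For (2): the $a$-dimension of $S$ equals $\dim a(S)$; since $a\circ p=\mu_d\circ a_d$ and $\mu_d$ is finite, $\dim a_d(S_d)=\dim \mu_d(a_d(S_d))=\dim a(p(S_d))=\dim a(S)$ because $p$ is surjective and finite; so the two dimensions coincide.

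Finally (3), assuming maximal $a$-dimension: I want $S_d$ minimal $\iff$ $S$ minimal. Since $p\colon S_d\to S$ is étale, we have $K_{S_d}=p^*K_S$. If $S$ is minimal, i.e. $K_S$ is nef, then $p^*K_S$ is nef (pullback of nef by a finite morphism is nef), so $S_d$ is minimal provided $S_d$ is also of general type, or at least that "minimal" here just means $K$ nef with no $(-1)$-curves — and an étale cover of a surface with no $(-1)$-curves has no $(-1)$-curves since a $(-1)$-curve $E\subset S_d$ would map to a curve on $S$ with, by the étale projection formula and Hurwitz, negative self-intersection and arithmetic genus $0$; here maximal $a$-dimension is used to rule out the ruled/elliptic degeneracies so that nefness of $K_S$ is equivalent to minimality. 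Conversely if $S_d$ is minimal then $K_{S_d}=p^*K_S$ nef; a curve $C\subset S$ with $K_S\cdot C<0$ would pull back to a curve on $S_d$ with $K_{S_d}\cdot(\text{component})<0$, contradiction, so $K_S$ is nef and $S$ is minimal. \textbf{The main obstacle} I expect is the connectedness of $S_d$: unravelling precisely why strong generation of $a$ (injectivity of $a^*$ on $\Pic^0$) is exactly the condition that prevents the pulled-back étale cover from disconnecting, i.e. translating "$a^*$ injective on $\Pic^0$" into "$H_1(S,\Z)\to H_1(A,\Z)$ has torsion-free cokernel, hence surjects onto $H_1(A,\Z)/dH_1(A,\Z)$" and checking this is the exact monodromy condition for connectedness of the fiber product of $a$ with $\mu_d$.
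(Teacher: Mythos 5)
Your treatment of smoothness, of (ii), of (iii), and your covering-space argument for connectedness are sound in substance (the paper gets connectedness more directly: $p_*\OO_{S_d}=\bigoplus_{\eta\in \Pic^0(A)[d]}a^*\eta$, so $h^0(\OO_{S_d})=\sum_\eta h^0(a^*\eta)=1$ by strong generation — the same underlying idea, packaged algebraically rather than via $\pi_1$). But your proof of item (1) contains a genuine error: both pillars of that step are false. First, $\mu_d^*$ on $\Pic^0(A)$ is the dual isogeny of $\mu_d$, i.e.\ multiplication by $d$ on the dual abelian variety; it is surjective but has kernel $\Pic^0(A)[d]\neq 0$, so it is neither an isomorphism nor injective. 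Second, $p^*\colon\Pic^0(S)\to\Pic^0(S_d)$ is \emph{not} injective: since $S_d\to S$ is the abelian \'etale cover defined by $\bigoplus_{\eta\in\Pic^0(A)[d]}a^*\eta$, the kernel of $p^*$ is exactly the subgroup $a^*\bigl(\Pic^0(A)[d]\bigr)$, which is nontrivial (of order $d^{2\dim A}$) precisely \emph{because} $a$ is strongly generating; the projection-formula argument you invoke only shows that a bundle killed by $p^*$ is a summand of $p_*\OO_{S_d}$, i.e.\ of the form $a^*\eta$, not that it is trivial. Consequently $p^*\circ a^*=(a\circ p)^*=a_d^*\circ\mu_d^*$ is not injective — its kernel is $\Pic^0(A)[d]$ — and the inference ``the composite is injective, hence $a_d^*$ is injective'' has no premise to stand on (and would in any case need surjectivity of $\mu_d^*$, not its injectivity, to transfer injectivity from the composite to $a_d^*$).

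The repair is the paper's kernel comparison. Using strong generation of $a$ and the description of $\ker p^*$ above, one gets $\ker\bigl((a\circ p)^*\bigr)=(a^*)^{-1}\bigl(a^*\Pic^0(A)[d]\bigr)=\Pic^0(A)[d]$, while also $\ker\mu_d^*=\Pic^0(A)[d]$. Now take $\beta\in\ker a_d^*$; since $\mu_d^*$ is surjective, write $\beta=\mu_d^*\gamma$. Then $\gamma\in\ker\bigl(a_d^*\circ\mu_d^*\bigr)=\Pic^0(A)[d]=\ker\mu_d^*$, hence $\beta=\mu_d^*\gamma=0$, so $a_d^*$ is injective. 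In short: (1) is true, but the point is exactly that the two kernels $\ker\bigl((a\circ p)^*\bigr)$ and $\ker\mu_d^*$ coincide, not that the maps involved are injective; your plan skips over the one place where the strong-generation hypothesis has to do real work in part (1).
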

\begin{proof} The surface $S_d$ is smooth since $p$ is \'etale. The connectedness of $S_d$ follows from the property of $a$ of being strongly generating, as follows.
 The surface $S_d$ is smooth, hence its number of connected components is equal to $h^0(\OO_{S_d})$.  By construction, the surface $S_d$ is isomorphic to  $\Spec(\oplus_{\eta\in A[d]}  a^*\eta) $, hence   $h^0(\OO_{S_d})=\sum _{\eta \in A[d]}h^0(a^*\eta)=1$, since $a^*\eta$ is a non-trivial torsion line bundle for every $0\ne \eta\in A[d]$ by the injectivity of $a^*$.

(i) Since $a$ is strongly generating, the kernel of $$(a\circ p)^*\colon \Pic^0(A)\longrightarrow \Pic^0(S_d)$$ is equal to $\Pic^0(A)[d]$. From  the commutativity of the diagram that defines $S_d$ and  the fact that the kernel of $\mu_d^*\colon \Pic^0(A)\to\Pic^0(A)$ is also equal to $\Pic^0(A)[d]$, it follows that $\ker a_d^*=\{0\}$.
\smallskip

(ii)  The claim follows again from the commutativity of the diagram, since $\mu_d$ is a finite map.
\smallskip

(iii)  Both $S$ and $S_d$ have non-negative Kodaira dimension since they have maximal $a$-dimension.
In addition, one has $K_{S_d}=p^*K_S$, because $p$ is \'etale, and therefore $K_{S_d}$ is nef  iff $K_S$ is.
\end{proof}
Let $H$ be a fixed very ample divisor on $A$.
Set  $C:=a^*(H)$ and $C_d:=a_d^*(H)$.
By \cite[Prop 2.3.5]{LB}, we have that
\begin{equation}\label{cd}
\mu_d^*H\equiv d^2H,
\quad
\mbox{ and so }
\quad
C_d\equiv \frac{1}{d^2} p^*C
\end{equation}
and
\begin{equation}\label{cdK}
C_d^2=d^{2q-4} C^2, \quad C_d\cdot  K_{S_d}=d^{2q-2} C\cdot  K_S.
\end{equation}
The map $p$ is \'etale of degree $d^{2q}$, so $K_{S_d}=p^*K_S$ and  the following numerical  invariants are multiplied by this degree:
$$K_{S_d}^2=d^{2q}K_S^2\qquad \chi(S_d)=d^{2q}\chi(S).$$

So,  given $a\colon S\longrightarrow  A$ as above and  a very ample divisor $H$ on $A$, for  any $d>0$  the covering trick
produces the surface $S_d$,  the map $a_d\colon S_d\longrightarrow A$, and the divisor $C_d$.

The behavior of the invariants  for $d\gg0$  (and in particular the divergence between the canonical divisor and the divisor $C_d$) is a key point in the covering trick.

A first application is the following nefness result.

\begin{lem}\label{K-Cnef}
With the above notation, if $S$ is minimal  of general type, then for $d\gg0$ the divisor $K_{S_d}-C_d$ is nef.
\end{lem}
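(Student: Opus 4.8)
The plan is to reduce the nefness of $K_{S_d} - C_d$ to a statement about $S$ itself, using the étale base change $p\colon S_d \to S$, and then to exploit the divergence in scaling between $K_S$ and $C_d$ recorded in \eqref{cd}. Since $p$ is étale we have $K_{S_d} = p^*K_S$, and by \eqref{cd} we have $C_d \equiv \tfrac{1}{d^2}\, p^*C$. Hence
\begin{equation*}
K_{S_d} - C_d \equiv p^*\!\left(K_S - \tfrac{1}{d^2}\,C\right).
\end{equation*}
Because $p$ is finite and surjective, a divisor on $S_d$ is nef if and only if its push-forward situation is controlled by $S$; more precisely, for any curve $\Gamma \subset S_d$ one has $(K_{S_d} - C_d)\cdot \Gamma = (K_S - \tfrac{1}{d^2}C)\cdot p_*\Gamma$, and $p_*\Gamma$ is a nonzero effective $1$-cycle on $S$. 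So it suffices to show that the $\Q$-divisor $K_S - \tfrac{1}{d^2}C$ is nef on $S$ for $d \gg 0$.

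The first step is therefore to understand the curves on $S$ on which $K_S$ has small degree. Since $S$ is minimal of general type, $K_S$ is nef and big, and the only irreducible curves $\Gamma$ with $K_S \cdot \Gamma = 0$ are the finitely many $(-2)$-curves contracted by the map to the canonical model. For each such curve, $C \cdot \Gamma \ge 0$ since $C = a^*H$ is the pullback of an ample divisor by a generically finite morphism — in fact $C$ is nef — so one must check that $C \cdot \Gamma = 0$ as well on these curves; this holds because $a$ contracts each $(-2)$-curve $\Gamma$ to a point (the composition $S \to A$ factors, up to the canonical contraction, through the canonical model, as $a$ is generically finite and $\Gamma^2 < 0$), whence $C \cdot \Gamma = H \cdot a_*\Gamma = 0$. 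For every other irreducible curve $\Gamma$ we have the strictly positive integer $K_S \cdot \Gamma \ge 1$, and then
\begin{equation*}
\left(K_S - \tfrac{1}{d^2}C\right)\cdot \Gamma \;=\; K_S\cdot\Gamma - \tfrac{1}{d^2}\, C\cdot\Gamma.
\end{equation*}

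The main obstacle is that there are infinitely many such curves $\Gamma$ and the ratio $C\cdot\Gamma \,/\, K_S\cdot\Gamma$ could a priori be unbounded, so it is not a priori clear that a single $d$ works for all of them. The key point is that this ratio is in fact bounded: writing $L_t := K_S - tC$ for $t \in \Q_{\ge 0}$, the divisor $L_0 = K_S$ is nef and, because $S$ is of general type and $K_S$ is big, there is a threshold $t_0 > 0$ such that $K_S - t_0 C$ is still nef — indeed nefness is a closed condition and $K_S$ lies in the interior of the nef cone relative to the ray direction $-C$ as long as $K_S^2 > 0$ and $K_S$ is not proportional to $C$ on the boundary; one extracts $t_0$ from, e.g., the rationality/boundedness of the nef cone near a big nef class, or directly by noting $C$ is nef and $K_S^{\,2} - 2tC\cdot K_S + t^2 C^2$ together with the finitely many $K_S$-trivial curves bounds the bad locus. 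Concretely: the set of classes of curves $\Gamma$ with $K_S \cdot \Gamma \le N$ for fixed $N$ is finite modulo the contracted ones (boundedness of low-degree curves with respect to a big nef divisor), so only finitely many inequalities are genuinely constraining, and each is satisfied once $d^2 > C\cdot\Gamma / K_S\cdot\Gamma$. Choosing $d$ with $d^2 \ge t_0^{-1}$ (equivalently $d^2$ larger than the finitely many offending ratios) makes $K_S - \tfrac{1}{d^2}C$ nef, and pulling back by $p$ gives the nefness of $K_{S_d} - C_d$. I expect the write-up to hinge precisely on producing the uniform threshold $t_0$, which is where one should cite boundedness of the nef cone (or of bounded-degree curves) for the minimal surface of general type $S$.
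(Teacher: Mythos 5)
Your reduction is the same as the paper's: since $p$ is \'etale, $K_{S_d}-C_d\equiv p^*\bigl(K_S-\tfrac{1}{d^2}C\bigr)$, so everything comes down to producing a single $\epsilon>0$ such that $K_S-tC$ is nef for all $0\le t<\epsilon$. The gap is exactly at the point you yourself flag as the hinge: your justification that such a threshold $t_0$ exists does not work as written. Knowing that the irreducible curves $\Gamma$ with $K_S\cdot\Gamma\le N$ form only finitely many classes (apart from the $(-2)$-curves, on which you correctly check $C\cdot\Gamma=0$) only disposes of the curves of low $K_S$-degree; it says nothing about curves of large $K_S$-degree, for which the ratio $C\cdot\Gamma/(K_S\cdot\Gamma)$ could a priori be unbounded, and the boundedness of this ratio is precisely equivalent to the existence of $t_0$. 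So ``only finitely many inequalities are genuinely constraining'' is an assertion, not an argument. Similarly, the remark that $K_S$ lies in the interior of the nef cone ``relative to the direction $-C$'' is the statement to be proved: a big and nef $K_S$ generally lies on the boundary of the nef cone, and for a nef divisor $C$ the class $K_S-tC$ can fail to be nef for every $t>0$; what saves the day here is that $C$ is trivial on the $K_S$-trivial curves, and this must be exploited, not merely recorded.

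The paper closes the gap using the very factorization you set up and then abandon: since $a$ contracts the $(-2)$-curves, it factors through the canonical model $\nu\colon S\to S_{\rm can}$, so $C=\nu^*\overline{C}$ with $\overline{C}=\overline{a}^*H$; as $K_{S_{\rm can}}$ is ample and ampleness is an open condition, $K_{S_{\rm can}}-t\overline{C}$ is ample for all rational $|t|<\epsilon$, and pulling back by $\nu$ gives that $K_S-tC$ is nef. This one line produces the uniform threshold with no boundedness statements about curves. (Your direct approach can be repaired, but only by a genuine compactness argument in $N^1(S)_{\R}$: curves $\Gamma_n$ violating nefness of $K_S-\tfrac{1}{n}C$, normalized by an ample degree, would converge to a nonzero pseudoeffective class $z$ with $K_S\cdot z=0$; one then shows such a $z$ lies in the negative-definite cone spanned by the $(-2)$-curves, forcing the $\Gamma_n$ eventually to be $(-2)$-curves and contradicting $C\cdot\Gamma_n>0$. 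That is considerably more work than the canonical-model argument.)
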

\begin{proof}
Let us consider  the canonical model $S_{\rm can}$ of $S$ and the natural map
$$S \stackrel{\nu}{\longrightarrow} S_{\rm can}, $$
which is the contraction of the configurations of   $(-2)$-curves  of  $S$.
The map $a\colon S\to A$ contracts the rational curves and therefore   it induces a map $\overline  a\colon S_{\rm can}\to A$;
we set   $\overline C:={\ol a}^*C$, so that $C=\nu^*\ol C$.
%be its image  via $\nu$; thus we can suppose that it does not intersect the rational double point corresponding to the $(-2)$-strings.

The canonical divisor $K_{S_{\rm can}}$ is ample, hence there is $\epsilon>0$ such that the $\Q$-divisor $K_{S_{\rm can}}-t\overline C$ is ample for every rational number $t$ with  $|t|<\epsilon$. Pulling back to  the smooth surface $S$, we obtain that   $$\nu^*(K_{S_{\rm can}}-t\overline C)=K_S-tC$$  is a nef divisor.

By \eqref{cd} and \eqref{cdK},  we have
$$K_{S_d}-C_d\equiv p^*\left(K_S-\frac{1}{d^2}C\right).$$
So for any $d$ such that $\frac{1}{d^2}<\epsilon $  we have that $K_{S_d}-C_d$ is nef.
\end{proof}

\subsection{Continuous rank and Severi-type inequalities}\label{severi-inequalities}

Let $S$ be a smooth  surface with a generating map $a\colon S\longrightarrow A$ to  an abelian variety $A$.
Let $L$ be a line bundle on $S$.
For any non-negative integer $i$ the {\em  $i$-th continuous rank } (\cite{barja-severi}) of $L$ is the integer
$$
h_a^i(S,L):=\min \left\{h^i(S,L+ a^*\alpha) \mid \alpha \in \Pic^0(A)\right\}.
$$
If no confusion is likely to arise we shall write $L+ \alpha$ for $L+ a^*\alpha$.

\begin{rem}
Observe that the  $i$-th  continuous rank of $L$ is just the rank of the $i$-th Fourier-Mukai transform of $L$ with respect to the map $a$.
If $L = K_S + D$ with $D$ nef, and $S$ is of maximal $a$-dimension, then $h^0_a(S,L) = \chi(S, L)$ by the Generic Vanishing theorem  (Theorem B in \cite{pp}, cf. \cite{gl} for the case $D=0$).
In particular, $h^0_{\alb_S}(S, K_S)=\chi(S)$.
\end{rem}

In \cite{barja-severi} the following Severi-type inequalities  are proved (see \cite{barja-severi} for a much more complete statement).

\begin{thm}[Barja, \cite{barja-severi}]\label{teomab}
Let $S$ be a smooth surface with a generating morphism $a\colon S\rightarrow A$ to  an abelian variety $A$;
suppose that $S$ is of maximal $a$-dimension  and $L$ is a nef divisor on $S$. Then
\begin{itemize}
\item [(i)] The following inequality holds:
$$
L^2\geq 2h^0_a(S,L).
$$
\item [(ii)] If, moreover, $K_S-L$ is pseudoeffective, then
$$
L^2\geq 4h^0_a(S,L).
$$

\item [(iii)] If $K_S\equiv L_1+L_2$ with $L_i$ nef, then
$$
K_S^2\geq 4\chi(S)+4h^1_{\alb_S}(S,L_1).
$$
\end{itemize}
\end{thm}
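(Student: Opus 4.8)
The plan is to derive (iii) from (ii) by a Riemann--Roch computation, and to prove (i) and (ii) by combining the covering trick of \S\ref{covering-trick} with Clifford's inequality (respectively the classical Castelnuovo genus bound) for the restriction of $L$ to a general curve on the covers $S_d$, taking the limit $d\to\infty$. First some reductions. Since $\alpha\mapsto h^0(S,L+a^*\alpha)$ is upper semicontinuous on $\Pic^0(A)$, its minimum $h^0_a(S,L)$ is attained on a dense open set; I fix such a general $\alpha$. Replacing $a$ by the strongly generating map through which it factors (the Remark in \S\ref{covering-trick}), which leaves the continuous rank unchanged, I may assume $a$ is strongly generating, so that Lemma~\ref{lem:Ctrick} and \eqref{cd}--\eqref{cdK} apply. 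If $h^0_a(S,L)\le 1$ there is nothing to prove since $L$ is nef; and if $L^2=0$ then either $L$ is numerically trivial or $\kappa(S,L)=1$, in which case $L$ is numerically a multiple of the fibre of a fibration not contracted by $a$ and $h^0_a(S,L)=0$. So I may assume $h^0_a(S,L)\ge 2$ and that $L$ is big and nef; moreover $K_S$ is pseudoeffective (as $\kappa(S)\ge 0$, $S$ being of maximal $a$-dimension), so $L\cdot K_S\ge 0$.

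Fix a very ample $H$ on $A$ and form $p\colon S_d\to S$, $a_d\colon S_d\to A$, $C_d=a_d^*H$ as in \S\ref{covering-trick}. Since $p$ is \'etale of degree $d^{2q}$ and $p_*\OO_{S_d}=\bigoplus_{\eta\in A[d]}a^*\eta$, writing $\beta=\mu_d^*\gamma$ one has $h^0(S_d,p^*L+a_d^*\beta)=\sum_{\eta\in A[d]}h^0(S,L+a^*\gamma+a^*\eta)$, and since the minimum of a sum of upper semicontinuous functions is attained where each summand is minimal,
$$(p^*L)^2=d^{2q}L^2,\qquad h^0_{a_d}(S_d,p^*L)=d^{2q}\,h^0_a(S,L).$$
By \eqref{cd}--\eqref{cdK} the remaining numbers $p^*L\cdot C_d$, $C_d^2$, $K_{S_d}\cdot C_d$ grow only like $d^{2q-2}$, $d^{2q-4}$, $d^{2q-2}$: the covering trick makes $(p^*L)^2$ and $h^0_{a_d}(S_d,p^*L)$ diverge at the same rate $d^{2q}$, all $C_d$-contributions being of lower order.

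For (i), fix $d\gg 0$ and a general $D\in|p^*L+a_d^*\beta|$ with $\beta$ general; for $d\gg0$ this system has growing dimension, and apart from the cases where it is composed with a pencil or the associated rational map is not birational onto its image (handled separately — the latter being precisely the case producing equality in (ii), where $S_d$ is a double cover of an abelian surface), the general member $D$ is a smooth irreducible curve. As $D\equiv p^*L+a_d^*\beta$ numerically, $p^*L+\alpha-D$ is a non-zero element of $\Pic^0(S_d)$ for general $\alpha$, hence has no sections, so restriction gives an injection $H^0(S_d,p^*L+\alpha)\into H^0(D,(p^*L+\alpha)|_D)$ and therefore $h^0(D,(p^*L+\alpha)|_D)\ge d^{2q}h^0_a(S,L)$. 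Now $\deg_D(p^*L)=(p^*L)^2=d^{2q}L^2$ while $2p_a(D)-2=(K_{S_d}+D)\cdot D=d^{2q}(L^2+L\cdot K_S)\ge d^{2q}L^2$, so $(p^*L+\alpha)|_D$ lies in the Clifford range and $h^0(D,(p^*L+\alpha)|_D)\le \tfrac12 d^{2q}L^2+1$. Dividing by $d^{2q}$ and letting $d\to\infty$ gives $L^2\ge 2h^0_a(S,L)$. For (ii), the extra hypothesis gives $L\cdot K_S\ge L^2$, hence $p_a(D)\ge 1+d^{2q}L^2$, so $(p^*L+\alpha)|_D$ is special with $\deg_D(p^*L)\le p_a(D)$; in this regime one replaces Clifford's bound by the classical Castelnuovo bound applied to the image of $D$ in $\pp^{r_d}$ (a non-degenerate curve of degree $d^{2q}L^2$, with $r_d+1\ge d^{2q}h^0_a(S,L)$), which is substantially stronger than Clifford and, together with the extra input needed to exclude the few extremal configurations, yields $L^2\ge 4h^0_a(S,L)$ in the limit.

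Finally (iii). Write $K_S\equiv L_1+L_2$; then $K_S-L_1=L_2$ is nef, hence pseudoeffective, and (ii) gives $L_i^2\ge 4h^0_{\alb_S}(S,L_i)$ for $i=1,2$. For general $\alpha\in\Pic^0(S)$, Riemann--Roch on $S$ and Serre duality ($h^2(S,L_1+\alpha)=h^0(S,L_2-\alpha)=h^0_{\alb_S}(S,L_2)$) give
$$\chi(\OO_S)-\tfrac12 L_1\cdot L_2=\chi(S,L_1+\alpha)=h^0_{\alb_S}(S,L_1)-h^1_{\alb_S}(S,L_1)+h^0_{\alb_S}(S,L_2),$$
so that $h^0_{\alb_S}(S,L_1)+h^0_{\alb_S}(S,L_2)=\chi(\OO_S)-\tfrac12 L_1\cdot L_2+h^1_{\alb_S}(S,L_1)$, whence
$$K_S^2=L_1^2+2L_1\cdot L_2+L_2^2\ge 4h^0_{\alb_S}(S,L_1)+4h^0_{\alb_S}(S,L_2)+2L_1\cdot L_2=4\chi(\OO_S)+4h^1_{\alb_S}(S,L_1).$$
The main obstacle is the sharp constant $4$ in (ii): Clifford's inequality alone yields only the factor $2$, so one must use Castelnuovo's bound in the range where it is tight while controlling the geometry of $|p^*L+a_d^*\beta|$ on $S_d$ — whether it is composed with a pencil, the structure of its base locus, and above all whether the associated map is birational (the non-birational case being exactly the one where equality holds). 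The covering trick is what makes all of this work, since every error term is $O(d^{2q-2})$ and disappears after dividing by $d^{2q}$ and passing to the limit.
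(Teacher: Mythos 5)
This theorem is not proved in the paper you were given: it is quoted verbatim from \cite{barja-severi}, so the only honest comparison is with Barja's own argument, which does not proceed by a single application of a genus bound to one curve but via pencils inside the continuous system $|L+\alpha|$, the induced fibrations over $\pp^1$, and Xiao's method (Harder--Narasimhan/slope arguments), combined with the covering trick to kill the error terms. Within your proposal, the reduction of (iii) to (ii) is correct and is essentially how Barja's Corollary D is obtained (compare Remark \ref{K-decomposable}): apply (ii) to $L_1$ and to $K_S-L_1$ (better to use $K_S-L_1$ than $L_2$, since the decomposition is only numerical), then conclude by Riemann--Roch, Serre duality and semicontinuity. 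The multiplicativity of $L^2$ and of the continuous rank under the covering trick is also right, and the Clifford-type restriction argument is a plausible route to (i), modulo the deferred degenerate cases (general member of $|p^*L+a_d^*\beta|$ reducible or the system composed with a pencil), and modulo the incorrect shortcut ``if $h^0_a(S,L)\le 1$ there is nothing to prove'': for $h^0_a=1$ part (i) asserts $L^2\ge 2$, which nefness alone does not give.

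The genuine gap is (ii), which is the heart of the statement and the only part used with the sharp constant in the main theorem. Replacing Clifford by Castelnuovo on the same curve $D\in|p^*L+a_d^*\beta|$ cannot produce the constant $4$. Indeed, with $\delta=\deg\,(p^*L+\alpha)|_D=d^{2q}L^2$, $r-1\approx d^{2q}h^0_a(S,L)$ and $g(D)=1+\tfrac{d^{2q}}{2}\bigl(L^2+K_S\cdot L\bigr)$, Castelnuovo's bound $g\le \tfrac{m(m-1)}{2}(r-1)+m\epsilon$ with $m\approx \delta/(r-1)$ gives asymptotically $g\le \delta^2/\bigl(2(r-1)\bigr)$, i.e., after dividing by $d^{2q}$ and letting $d\to\infty$, $\tfrac12\bigl(L^2+K_S\cdot L\bigr)\le (L^2)^2/\bigl(2h^0_a(S,L)\bigr)$. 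The hypothesis that $K_S-L$ be pseudoeffective only gives $K_S\cdot L\ge L^2$, and the outcome is $L^2\ge 2h^0_a(S,L)$ --- exactly the constant of (i) again, even granting that the restricted series is birational. (Sanity check on the expected equality case: for $S$ a double cover of an abelian surface with $L=K_S$ one finds $g(D)\sim d^4K_S^2$ against a Castelnuovo bound $\sim 2\,d^4K_S^2$, a factor-two slack with nothing to exploit.) So the missing piece is not ``excluding a few extremal configurations'': for this choice of curve and series the asymptotic Castelnuovo estimate is structurally too weak by a factor of two. The constant $4$ in \cite{barja-severi} comes from a different mechanism, in which Clifford-type bounds on the fibres of a fibration over $\pp^1$ are fed into Xiao's method and effectively doubled; without some such input, part (ii), and hence your derivation of (iii), is not established.
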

We now state some consequences of these results for the extremal cases of the above inequalities.

\begin{rem}\label{K-decomposable}
The proof of Theorem \ref{teomab}(iii) in \cite[Corollary D (Cor. 4.9)]{barja-severi} shows that if we have a nef decomposition $K_S\equiv L_1+L_2$, then  $$K^2_S-4\chi(S)\geq (L_1^2-4h_{\alb_S}^0(S,L_1))+(L_2^2-4h_{\alb_S}^0(S,L_2))+4h_{\alb_S}^1(S,L_1).$$ Hence it follows that if  we have
a minimal surface of maximal Albanese dimension  $S$ with such a  decomposition $K_S\equiv L_1+L_2$ and such that  $K_S^2=4\chi(S)$, then the following holds:
\begin{enumerate}
\item $L_i^2=4h_{\alb_S}^0(S,L_i) $, for $i=1,2$;
\item $h_{\alb_S}^1(S,L_i)=0$ , for $i=1,2$.
\end{enumerate}
\end{rem}

\medskip

\subsection{Applications of Castelnuovo's bound}\label{appl-castelnuovo}

We now see that the classical Castelnuovo bound combined with the above results implies a condition on the asymptotic behavior of the invariants $C_d^2$ and $h^0_a(S_d,C_d)$ arising from the covering trick (\S \ref{covering-trick}).
Let us recall the result of Castelnuovo (\cite[III.2]{ACGH}).
\begin{thm}[Castelnuovo's bound]\label{castelnuovo}
Let $C$ be a smooth curve that admits a birational $g_d^r$. Then the genus of $C$ satisfies the inequality
$$
g\leq \frac{m(m-1)}{2}(r-1)+m\epsilon,
$$
where the integers $m$ and $\epsilon$ are respectively the quotient and the remainder  of the division of $d-1$  by $r-1$.
\end{thm}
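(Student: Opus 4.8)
I expect the final statement above is Castelnuovo's genus bound itself, so the proposal proves that bound.

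The plan is to realise the birational $g^r_d$ as a birational morphism $\nu\colon C\to C'\subset\pp^r$ onto a nondegenerate curve $C'$ of degree $d$, to set $L:=\nu^*\OO_{\pp^r}(1)$ (a line bundle of degree $d$ on $C$ with $h^0(C,L)\ge r+1$), and to read the genus off the growth of the Hilbert function $k\mapsto h^0(C,kL)$. By Riemann--Roch, $h^0(C,kL)=kd-g+1$ for $k\gg 0$, and since $h^0(C,\OO_C)=1$ this gives, for all large $N$,
$$Nd-g=\sum_{k=1}^{N}\left(h^0(C,kL)-h^0(C,(k-1)L)\right).$$
Hence an upper bound for $g$ amounts to a lower bound for each difference on the right. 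First I would fix a general hyperplane $H\cong\pp^{r-1}$ and put $\Gamma:=C'\cap H$; in characteristic zero Bertini ensures that $\Gamma$ is a set of $d$ distinct smooth points, which I identify with a reduced degree-$d$ divisor on $C$ belonging to $|L|$.

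The essential geometric input is the \emph{general position theorem}: for general $H$ the points of $\Gamma$ lie in general position in $\pp^{r-1}$, that is, any $r$ of them are linearly independent. Granting this, the next step is the combinatorial estimate that $\Gamma$ imposes at least $\min\{d,\,k(r-1)+1\}$ conditions on hypersurfaces of degree $k$. I would prove it by exhibiting, for each of the first $\min\{d,\,k(r-1)+1\}$ points $p_i$, a degree-$k$ hypersurface passing through all the other points of $\Gamma$ but not through $p_i$, constructed as a union of $k$ hyperplanes, each spanned by a group of at most $r-1$ of the points $p_j$ with $j\ne i$; such hyperplanes exist exactly because $\Gamma$ is in general position. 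This shows that the restriction map $H^0(\pp^{r-1},\OO(k))\to H^0(\Gamma,\OO_\Gamma)\simeq\C^d$ has image of dimension at least $\min\{d,\,k(r-1)+1\}$.

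To transfer this to the Hilbert function I would twist the structure sequence of $\Gamma\subset C$ by $kL$, obtaining
$$0\to\OO_C((k-1)L)\to\OO_C(kL)\to\OO_\Gamma\to 0,$$
so that $h^0(C,kL)-h^0(C,(k-1)L)$ is the dimension of the image of $H^0(C,kL)\to H^0(\Gamma,\OO_\Gamma)$. Because degree-$k$ forms on $\pp^r$ restrict surjectively onto degree-$k$ forms on $H$, this image contains the image of $H^0(\pp^{r-1},\OO(k))\to\C^d$, and the combinatorial estimate yields
$$h^0(C,kL)-h^0(C,(k-1)L)\ge\min\{d,\,k(r-1)+1\}.$$
Now write $d-1=m(r-1)+\epsilon$ with $0\le\epsilon\le r-2$; then $\min\{d,\,k(r-1)+1\}=k(r-1)+1$ for $k\le m$ and equals $d$ for $k>m$. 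Summing the last inequality for $1\le k\le N$ with $N\gg 0$ and comparing with the Riemann--Roch identity, the contributions $d$ from the indices $k>m$ match the corresponding terms of $Nd$, leaving
$$g\le md-\sum_{k=1}^{m}\left(k(r-1)+1\right)=md-(r-1)\frac{m(m+1)}{2}-m.$$
Substituting $d=m(r-1)+\epsilon+1$ collapses the right-hand side to $\frac{m(m-1)}{2}(r-1)+m\epsilon$, which is the asserted bound.

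I expect the general position theorem to be the main obstacle: proving that the general hyperplane section of an irreducible nondegenerate curve is in general position is precisely where irreducibility and the characteristic-zero hypothesis are used, through the uniform position principle (the transitivity of the monodromy action on the $d$ intersection points as $H$ varies). By contrast, the combinatorial estimate—though it is the technical core of the computation—is elementary once general position is available, and the final summation is purely formal.
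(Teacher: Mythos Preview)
Your proof is correct and is essentially the standard argument (general position theorem for a generic hyperplane section, the combinatorial lower bound on the number of conditions imposed on degree-$k$ hypersurfaces, and the telescoping comparison with Riemann--Roch). However, the paper does not give its own proof of this statement: Theorem~\ref{castelnuovo} is simply quoted as Castelnuovo's classical bound with a reference to \cite[III.2]{ACGH}, and is used as a black box in the proof of Lemma~\ref{birational}. So there is nothing to compare against; what you have written is precisely the argument one finds in the cited reference.
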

\begin{lem}\label{birational}
Let $S$ be a surface of Kodaira dimension $\kod(S)>0$.
Let  $a\colon S\longrightarrow A$ be a strongly generating map to an abelian variety such that  $S$ is of maximal $a$-dimension.
Let $C_d$ be the nef divisor arising from the covering trick (\S \ref{covering-trick}).
Suppose that for $d\gg 0$  and for general $\alpha\in \Pic^0(A)$ the linear system $|C_d + \alpha|$ on $S_d$ is birational; then for $d\gg0$ the ratio  $C_d^2/h^0_a(S_d, C_d)$ tends to infinity with order at least quadratical in $d$.
\end{lem}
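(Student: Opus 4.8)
The plan is to translate the birationality hypothesis on $|C_d+\alpha|$ into a Castelnuovo-type bound relating $h^0_a(S_d,C_d)$, $C_d^2$, and $C_d\cdot K_{S_d}$, and then to exploit the fact that, by the formulas \eqref{cd} and \eqref{cdK}, the ratio $C_d^2/(C_d\cdot K_{S_d})$ behaves like $1/d^2$ while $C_d^2$ itself grows like $d^{2q-4}$. More precisely, let $r_d:=h^0_a(S_d,C_d)-1$ be the (minimal) projective dimension of the image of the map $\varphi_d$ defined by $|C_d+\alpha|$ for general $\alpha$; this map is birational onto its image $X_d\subset\mathbb{P}^{r_d}$ by hypothesis. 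By Theorem \ref{teomab}(i) applied to the nef divisor $C_d$ we already have $C_d^2\ge 2h^0_a(S_d,C_d)$, so $r_d\to\infty$ as $d\to\infty$ provided $C_d^2\to\infty$, which holds when $q\ge 3$; the case $q=2$ would need to be handled separately (but there $C_d^2=d^{0}C^2$ is bounded, and one should check whether the statement is vacuous or whether the hypothesis fails — I expect the lemma is used only when $q$ is large enough, or that one intersects $X_d$ with hyperplanes to reduce to curves regardless).

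**The main step** is the reduction to curves: cut the surface image $X_d\subset\mathbb{P}^{r_d}$ with a general hyperplane to obtain a smooth curve $Y\in |C_d+\alpha|$ whose image is a (birational) $g^{r_d-1}_{d_0}$ with $d_0=C_d\cdot(C_d+\alpha)=C_d^2$ (numerically), and whose genus is computed by adjunction: $2g(Y)-2=(C_d+\alpha)\cdot(C_d+\alpha+K_{S_d})=C_d^2+C_d\cdot K_{S_d}$ up to the numerically trivial correction from $\alpha$. Feeding $g=g(Y)$, $r=r_d-1$, $d=C_d^2$ into Castelnuovo's bound (Theorem \ref{castelnuovo}) gives, with $m\approx (C_d^2-1)/(r_d-2)$,
$$
\tfrac12\left(C_d^2+C_d\cdot K_{S_d}\right)\lesssim g(Y)\le \frac{m(m-1)}{2}(r_d-2)+m\epsilon\lesssim \frac{(C_d^2)^2}{2(r_d-2)}.
$$
Rearranging, $r_d-2\lesssim \dfrac{(C_d^2)^2}{C_d^2+C_d\cdot K_{S_d}}$. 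Now plug in \eqref{cdK}: $C_d^2=d^{2q-4}C^2$ and $C_d\cdot K_{S_d}=d^{2q-2}C\cdot K_S$, so the denominator is dominated by the $d^{2q-2}$ term, giving
$$
h^0_a(S_d,C_d)-3 = r_d-2 \;\lesssim\; \frac{d^{4q-8}(C^2)^2}{d^{2q-2}\,C\cdot K_S}= \frac{(C^2)^2}{C\cdot K_S}\,d^{2q-6},
$$
whereas $C_d^2=d^{2q-4}C^2$. Therefore
$$
\frac{C_d^2}{h^0_a(S_d,C_d)}\;\gtrsim\;\frac{d^{2q-4}C^2}{d^{2q-6}(C^2)^2/(C\cdot K_S)}\;=\;\frac{C\cdot K_S}{C^2}\,d^{2},
$$
which is quadratic in $d$, as claimed.

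**The part I expect to be delicate** is the bookkeeping of the $\alpha$-corrections and the passage from the possibly non-normal, non-smooth surface image $X_d$ to a smooth hyperplane section: one must ensure that a general $Y\in|C_d+\alpha|$ is smooth (Bertini, using that $|C_d+\alpha|$ is birational hence has no fixed part in an appropriate sense, or at least that the moving part defines a birational map) and that the induced linear series on $Y$ is still birational of the expected dimension $r_d-1$ and degree $C_d^2$ — uniformizing these statements for \emph{all} large $d$ is where care is needed. One also has to be slightly careful that $\alpha\in\Pic^0(A)$ contributes $0$ to every intersection number since it is numerically trivial, so $C_d\cdot(C_d+\alpha)=C_d^2$ and $(C_d+\alpha)\cdot K_{S_d}=C_d\cdot K_{S_d}$ exactly; this keeps the adjunction and degree computations clean. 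Finally, I should double-check the edge behavior of the Castelnuovo parameters $m,\epsilon$ (namely that $m\to\infty$ and the $m\epsilon$ term is lower order, which holds because $C_d^2/r_d\to\infty$ by Theorem \ref{teomab}(i) together with $r_d\ge C_d^2/2-1\to\infty$), but this is routine asymptotics rather than a genuine obstacle.
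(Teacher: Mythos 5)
Your strategy is essentially the paper's: restrict the birational series to a general curve section, bound its genus through Castelnuovo, and play the growth $C_d\cdot K_{S_d}=d^{2q-2}\,C\cdot K_S$ against $C_d^2=d^{2q-4}C^2$. The bookkeeping differences are harmless: you restrict $|C_d+\alpha|$ to one of its own general members (losing one projective dimension), while the paper restricts the line bundle $C_d+\alpha$ to a general member of $|C_d|$ and uses generic vanishing to keep the full $h^0_a(S_d,C_d)$; and the smoothness/irreducibility worries you list are settled at once by noting that $|C_d+\alpha|$ (and $|C_d|$) contains the base-point-free pullback $a_d^*|H+\alpha|$ of a very ample system, so Bertini applies and the restricted series is birational on a general member.

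The genuine weak point is your final rearrangement. Solving Castelnuovo for $r_d-2$ and dividing gives $h^0_a(S_d,C_d)-3\lesssim d^{2q-6}$, which yields $C_d^2/h^0_a\gtrsim d^2$ only when the $d^{2q-6}$ term dominates, i.e.\ essentially when $\dim A\ge 3$ (or $h^0_a$ is unbounded). You flag the case $q=\dim A=2$ and guess the lemma is only needed for large $q$; that guess is wrong: in the main proof the lemma is applied with $A=\Alb(S)$ at a stage where $q(S)$ is still unknown --- proving $q(S)=2$ is precisely the goal --- so the case $\dim A=2$ cannot be set aside. The fix is not to solve for $r_d$ at all: as in the paper, set $M(d)=C_d^2/h^0_a(S_d,C_d)$ and bound the Castelnuovo parameter by $m_d=\left[\,(C_d^2-1)/(h^0_a-2)\,\right]\le 3M(d)$ (valid since a birational series has $h^0_a\ge 3$); Castelnuovo then gives $d^{2q-2}\tfrac{K_S\cdot C}{2}+d^{2q-4}\tfrac{C^2}{2}+1\le \tfrac32 M(d)\,d^{2q-4}C^2$, whence $M(d)\ge \tfrac{K_S\cdot C}{3C^2}\,d^2$ uniformly for every $q\ge 2$ (here $K_S\cdot C>0$ because $\kod(S)>0$ and $S$ has maximal $a$-dimension --- a positivity you use implicitly and should record). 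When $\dim A=2$ this inequality simply shows the birationality hypothesis cannot hold for $d\gg 0$, which is exactly what the contrapositive application in the main proof requires; in fact your own intermediate inequality $\tfrac12\left(C_d^2+C_d\cdot K_{S_d}\right)\lesssim (C_d^2)^2/(2(r_d-2))$ already contains this (its left side grows while its right side stays bounded when $q=2$), so the defect is one of organization rather than substance --- but as written the $\dim A=2$ case is left unproved.
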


\begin{proof}
Let us fix $d$ and suppose that  for general $\alpha\in \Pic^0(A)$  the linear system $|C_d+ \alpha|$ on $S_d$ is birational.
Notice that by generic vanishing (\cite{gl}) we have that, for general  $\alpha\in \Pic^0(A)$ again,
$$
h^0_a(S_d,C_d)=h^0(S_d, C_d+ \alpha)= h^0(C_d, (C_d+ \alpha)_{|C_d}).
$$
Let us suppose that $C_d$ is a general  element in its linear system; the linear system $|C_d|$ is base point free, since it contains the linear system $a_d^*|H|$ and $H$ is very ample on $A$,  therefore $C_d$ is smooth and irreducible.
We want to apply Castelnuovo's bound to  the linear series defined by  $|(C_d+ \alpha)_{|C_d}|$, which induces a birational map:

We can compute  the  genus $g(C_d)$ of $C_d$ using the   adjunction formula and \eqref{cd}, \eqref{cdK}:
$$
g(C_d)= \frac{K_{S_d}\cdot C_d}{2}+\frac{C_d^2}{2}+1=d^{2q-2}\frac{K_S\cdot C}{2}+d^{2q-4}\frac{C^2}{2}+1.
$$
The bound in Theorem \ref{castelnuovo} is
$$
g(C_d)\leq \frac{m_d(m_d-1)}{2}(h^0_a(S_d,C_d)-2)+m_d\epsilon,
%<\frac{m_d(m_d+1)}{2}(h^0_a(S_d,C_d)-1),
$$
 where $m_d =\left[\frac{C_d^2-1}{h^0_a(C_d)-2}\right]$.

 Let $M(d)$  be the ratio $C_d^2/h^0_a(S_d,C_d).$
 Observe that
 $$
 m_d =\left[\frac{C_d^2-1}{h^0_a(C_d)-2}\right]\leq \frac{C_d^2-1}{h^0_a(C_d)-2}\leq 3 M(d).
 $$
The inequality of Castelnuovo  and (\ref{cdK}) implies that
 $$
 d^{2q-2}\frac{K_S\cdot C}{2}+d^{2q-4}\frac{C^2}{2}+1\leq \frac{3}{2} M(d)^2 h^0_a(S_d,C_d) = \frac{3}{2} M(d) d^{2q-4}C^2.
 $$
Observe that  $K_S\cdot C>0$ as $S$ has  strictly positive Kodaira dimension, is of maximal $a$-dimension and $C$ is the pullback of a very ample divisor on $A$.
Hence we necessarily have that $M(d)$ grows at least as $d^2$.
\end{proof}

\bigskip

%%%%%%%
\section{Proof of the main result}
In this section we prove Theorem \ref{main}.
We  start with the ``only if'' part, assuming   that
$S$ is  a minimal surface of general type and of maximal Albanese dimension with $K^2_S=4\chi(S)$.

For $d>0$, let $C_d$ be the divisor given by the covering trick applied to $a=\alb_S$ (cf. \S \ref{covering-trick}).  
%we may choose the divisor $H$ so that $H+ \alpha$ is very ample for every $\alpha \in \Pic^0(A)$, for instance by taking $H=3H_0$ with $H_0$ ample.
%First of all we give a factorization of the map $(\alb_S)_d$ via the map induced by $|C_d+\alpha|$, where $\alpha\in \Pic^0(S)$ is general.
Let us fix a general $\alpha $ in $\Pic^0(A)$ and consider the following diagram:
\begin{equation}\label{fattorizzazione}
\xymatrix{
&&&\\
S_d  \ar[rr]^{(\alb_S)_d} \ar [d]_{f_d}& &A \ar@{^{(}->}[r]& \bP( H^0(A, H+\alpha)^\vee)\\
R_d\ar[urr]_{g_d}\ar[rr]_{\nu} && \overline{R_d}\ar[u]\ar@{^{(}->}[r]& \bP( H^0(S_d, C_d+\alpha)^\vee)\ar@{-->}[u]\\}
\end{equation}

Where:
\begin{itemize}
\item the surface $\overline{R_d}$ is the image of the map induced by  $|C_d+ \alpha|$ on $S_d$.
\item $\nu\colon R_d\longrightarrow \overline{R_d}$ is the normalization.
\end{itemize}
 Let us define $D_d:= g_d^*H$ on $R_d$.
 \begin{rem}\label{costruzione}
Let us make the following remarks:
\begin{itemize}
\item[(i)] The linear system $|D_d+ \alpha|=|g_d^*(H+ \alpha)|$ on $R_d$  is birational by construction, and $h^0(R_d, D_d+ \alpha)=h^0(S_d, C_d+ \alpha)$.
\item[(ii)] If $q(S)>2$ then the Kodaira dimension of $R_d$ is strictly positive, for any $d$, because $R_d$ dominates the image of the map $(\alb_S)_d$ which generates $A$ by construction.
\item[(iii)] By Lemma \ref{K-Cnef}, for $d\gg 0$ we have that both  $K_{S_d}-C_d$ and $C_d$ are nef. Since  $K^2_S=4\chi (S)$ then, by Lemma \ref{K-decomposable} we must have $C_d^2=4h^0_a(S_d,C_d).$
\item[(iv)] The map $f_d$ is generically finite, and it is  not birational for $d\gg0$ by (iii) and Lemma \ref{birational}.
\end{itemize}
\end{rem}

First we show that the map $f_d$ has degree 2.
% ${\rm kod}(R_d)=0$ for $d\gg0$ (and hence $q(S)=2$ by Remark \ref{costruzione}(ii)).
Indeed, by Remark \ref{costruzione}(iii) for every $d\gg0$ we have the following chain of equalities
\begin{equation}\label{eq:D1}
(\deg f_{d})D_{d}^2=C_{d}^2=4h^0(S_{d},C_{d}+ \alpha)=4h^0(R_{d},D_{d}+ \alpha).
\end{equation}
Up to  passing to a  desingularization, we can suppose that $R_{d}$ is smooth; then
Theorem \ref{teomab} applied to the pair $(R_{d}, D_{d})$
 implies that
\begin{equation}\label{eq:D2}
D_{d}^2\geq 2 h^0_a(R_{d},D_{d})=2h^0(R_{d},D_{d}+ \alpha).
\end{equation}
Combining \eqref{eq:D1}, \eqref{eq:D2} and Remark \ref{costruzione} (iv), we obtain that $\deg f_d=2$ and   $$D_{d}^2=2h^0_a(R_{d},D_{d}+\alpha).$$

\smallskip

We now prove that the Kodaira dimension of $R_d$ is 0. Since $\kod(R_d)\ge 0$ and $D_d$ is nef and big, we have $K_{R_d}D_d\ge 0$ and $K_{R_d}D_d=0$ only if $\kod(R_d)=0$; hence the adjunction formula gives $$2r+2=D_d^2\le2g(D_d)-2,$$
 i.e.,  $r\leq g(D_d)-2$, with equality holding only if $\kod(R_d)=0$.

Consider now a general curve $D_d$, which is smooth since the system $|D_d|$ is free by construction, and for a general $\alpha \in {\rm Pic}^0(A)$ the linear system $L_{\alpha}:=|(D_d+\alpha)_{|D_d}|$. By the previous computation and generic vanishing on $R_d$, we have that $L_\alpha$ induces a $g^r_{2r+2}$ on $D_d$, where $r=h^0(R_d,D_d+\alpha)-1$.

%The linear series $L_{\alpha}$, for $\alpha $ general in $ \Pic^0(A)$ are contained in the Brill-Noether locus $W^r_{2r+2}(D_d)$;
Thus we have a  map $\Pic^0(A)\rightarrow W^r_{2r+2}(D_d)$ that is injective because the map $g_d$ is strongly generating, and $D_d^2>0$.

If $r \leq g(D_d)-3$  we can  apply an inequality  of Debarre and Fahlaoui (\cite{DF}, Proposition 3.3) on the dimension of abelian varieties contained in the Brill-Noether locus, that in our case  gives  $\dim A\le (d-2r)/2=1$, a contradiction.

So we have that $r=g(D_d)-2$ and $d=2g(D_d)-2$,  and therefore $\kod(R_d)=0$ by the previous remarks.
%we can conclude that, for a general $\alpha$, the line bundle $L_{\alpha}$ is a paracanonical divisor on $D_d$ and hence $K_{R_d}\cdot D_d=0$. Since $D_d$ moves without base points and induces a finite map on $R_d$ this shows that no multiple of $K_{R_d}$ can move and so ${\rm kod}(R_d)=0$ as stated.

Hence   for $d\gg0$ the surface $R_d$ is birational to an abelian surface  by
 classification. Observe that the map $g_d$ is birational. Indeed, assume otherwise;  then  $g_d$  induces  a  non trivial \'etale map between the minimal model  $T_d$ of $R_d$ and the abelian surface $A$.  Since a map from a smooth surface to an abelian variety is a morphism,  the morphism $a_d\colon S_d\to A$ factorizes as $S_d\to T_d\to A$ and therefore $a_d$ is not strongly generating, contradicting   Lemma \ref{lem:Ctrick}. It follows that  for $d\gg 0$ the map $g_d$ is birational and
 ${\rm deg}a_d={\rm deg}f_d=2$.
\begin{rem}
It is worth remarking that a key point of our proof is that we are not taking the  limit  in the covering trick as in \cite{Pa}.
But it is  crucial that we are allowed to take $d\gg 0$, because the  properties proved above for the factorization (\ref{fattorizzazione}) can fail to hold for small $d$, as we now observe. 
For $d$ small, and  $H$ ample enough, it can well happen
that $C_d$ is  birational on $S_d$.
Consider for instance for the case $d=1$ the following situation: $\alb_S$ is a double covering, and $B$ is its ample branch locus. Then choosing $H=kB$, we have that
$(\alb_S)^*(H)=2 kK_S$, which is very ample on $S$ for $k\geq 3$.
In case $C_d$ is very ample, then the map $f_d$ is an isomorphism between $S_d$ and $R_d$ (in particular $\kod(R_d)=2$), and $g_d$ coincides with $(\alb_S)_d$.

Indeed, the process of taking $d$ big enough corresponds to choosing on $A$ a divisor which is  very ample but also ``comparatively small''.
\end{rem}

Up to  now we have proved that for a surface $S$ satisfying  the assumptions  of Theorem \ref{main}, the Albanese morphism
$$\alb_S\colon S\longrightarrow \Alb(S)$$
is a generically finite map of degree 2 onto the Albanese surface $\Alb(S)$.
We now see that the canonical model of $S$ is a double covering  of $\Alb(S)$, and describe its possible singularities.

%Observe first that if $\pi\colon Y\rightarrow A$ is a double covering of an abelian surface $A$ (i.e. $\pi$ is a finite map of degree 2) with reduced branch locus $B$,

Let us now consider $S$ as in Theorem \ref{main}, and consider the Stein factorization of $\alb_S$:
$$
\xymatrix{S \ar@/^1pc/[rr]^{\alb_S}\ar[r]_{\alpha}& S' \ar[r]_{\pi}& \Alb(S)},$$
where $S'$ is normal and $\pi$ is a double covering.
Let $B\subset A$ be the branch locus of $\pi$, which is reduced.
Note that the map $\alpha$ factors through the canonical model $S_{\rm can}$ of $S$. We are going to prove that indeed $S'=S_{\rm can}$.

Note that  as $S'$ is of general type, $B$ is ample; we can compute the  canonical invariants of $S'$ as follows:
\begin{equation}\label{ok}
K_{S'}^2=\frac{B^2}{2}; \quad \chi(S')=\frac{B^2}{8},
\end{equation}
so that $S'$ satisfies the Severi equality.

Let us perform the canonical resolution of the double covering $\pi$
(see \cite{bpv} III.7):
$$
\xymatrix@R-7pt{
 \widetilde S:=S_k \ar[r]^{\sigma_k} \ar@<+12pt>[d] & S_{k-1} \ar[r] \ar[d] & \cdots \ar[r]& S_1  \ar[r]^{\sigma_1} \ar[d] & S_0 =S' \ar@<-12pt>[d]^\pi \\
\widetilde A:= A_k \ar[r]^{\tau_k} & A_{k-1} \ar[r] & \cdots \ar[r]& A_1 \ar[r]^{\tau_1} & A_0 =A\\
}
$$
Recall that the maps $\tau_j$ are successive blow-ups that resolve the singularities of $B$;
the morphism $S_j\rightarrow A_j$
is the double covering with branch locus defined by the inductive formula:
$$B_j:=\tau_j^*B_{j-1}-2\left[\frac{m_{j-1}}{2}\right]E_j,$$ where $E_j$ is the exceptional
divisor of $\tau_j$, $m_{j-1}$ is the multiplicity for $B_{j-1}$ of the blown-up point, and $[\ \ ]$ denotes the integral part.
The surface $\widetilde S$ is smooth, not necessarily minimal, birational to $S$, and therefore $K^2_{\widetilde S}=K^2_S-\delta$, with $\delta\ge 0$.
One has the following relations (cf. \cite[Obs.1.16]{persson}):
$$
K_S^2 =K_{\widetilde S}^2+\delta
=K_{S'}^2-2\sum_{i=1}^{k}\left(\left[\frac{m_i}{2}\right]-1\right)^2 +\delta,
$$
and that
$$
\chi(S)=\chi(\widetilde S)=
\chi(S')-\frac{1}{2}\sum_{i=1}^{k}\left[\frac{m_i}{2}\right]\left(\left[\frac{m_i}{2}\right]-1\right).
$$
Recall now that a singularity  $P$ of the branch locus   $B$ on $A$ is called {\em negligible} (or  simple, inessential, etc\dots) if $K_S^2=K_{S'}^2$ and $\chi(S)=\chi(S')$.
By the above formulae (cf. \cite[Prop. 1.8]{persson}), this is equivalent to $P$ having multiplicity $\leq 3$ and all the points  infinitely near  to $P$ having multiplicity $\leq 2$.

So we have by assumption and by (\ref{ok})
$$
0=K_S^2-4\chi(S)= K_{\widetilde S}^2-4\chi(\widetilde S)+ \delta=
$$
$$
=K_{S'}^2-4\chi(S')+2\sum_{i=1}^{k}\left(\left[\frac{m_i}{2}\right]-1\right) +\delta=2\sum_{i=1}^{k}\left(\left[\frac{m_i}{2}\right]-1\right)+ \delta.
$$
In particular  we deduce that $\delta =0$ and $2\sum_{i=1}^{k}\left(\left[\frac{m_i}{2}\right]-1\right)=0$,
and the singularities of $B$ are necessarily negligible.

Now we need only to observe that, since the branch locus $B$   has negligible singularities, then the double cover $S'$ has Du Val singularities (cf. \cite[Table 1.18]{persson}); since there is a birational morphism   $S_{\rm can}\to S'$, we conclude that $S'=S_{\rm can}$.

On the other hand, it is clear from the above discussion that a double cover of an abelian surface with ample branch locus $B$ with negligible  singularities is the canonical model of a general type surface of maximal Albanese dimension lying on the Severi line.

     \end{document}